\DeclareMathOperator{\rk}{rank}
\DeclareMathOperator{\kernel}{Ker}
\DeclareMathOperator{\im}{Im}
\DeclareMathOperator{\sign}{sign}
\DeclareMathOperator{\id}{\id}
\newtheorem{thm}{Theorem}[section]
\newtheorem{lemma}[thm]{Lemma}
\numberwithin{equation}{section}
\newtheorem{example}[thm]{Example}
\newcommand{\Z}{\mathbb{Z}}
\newcommand{\R}{\mathbb{R}}
\newcommand{\I}{\mathcal{I}}
\newcommand{\rp}{\mathbb{R}\text{P}}
\newcommand{\rpt}{\mathbb{R}\text{P}^3}
\newcommand{\ijk}{{i,j,k}}
\newcommand{\jk}{{j,k}}
\newcommand{\lra}{\longrightarrow}
\newcommand{\lms}{\longmapsto}
\newcommand{\kbsm}{\mathcal{S}_{2,\infty}}
\newcommand{\poincare}{\mathcal{P}}
\newcommand{\X}{\mathcal{X}}
\newcommand{\bi}{\overline{1}}
\newcommand{\bX}{\overline{X}}
\newcommand{\bV}{\overline{V}}
\newcommand{\bI}{\overline{I}}
\newcommand{\shift}[1]{\lbrace #1 \rbrace}
\newcommand{\Ri}{\textup{R-I}}
\newcommand{\Rii}{\textup{R-II}}
\newcommand{\Riii}{\textup{R-III}}
\newcommand{\Riv}{\textup{R-IV}}
\newcommand{\Rv}{\textup{R-V}}
\newcommand{\brac}[1] {\left\llbracket \raisebox{-0.63 em}{\includegraphics{#1}} \right\rrbracket}
\newcommand{\bracmid}[1] {\left\llbracket \raisebox{-0.53 em}{\includegraphics{#1}} \right\rrbracket}
\newcommand{\bracsmall}[1] {\left\llbracket \raisebox{-0.43 em}{\includegraphics[scale=0.75]{#1}} \right\rrbracket}
\newcommand{\bracsmaller}[1] {\left\llbracket \raisebox{-0.33 em}{\includegraphics[scale=0.75]{#1}} \right\rrbracket}
\newcommand{\smallbullet}{\,\begin{picture}(-1,1)(-1,-3)\circle*{2.0}\end{picture} \; \, }
\date{2013}
\begin{document}
\title{The categorification of the Kauffman bracket skein module of $\rpt$}

\author{Bo\v stjan Gabrov\v sek}

\maketitle


\begin{abstract}
Khovanov homology, an invariant of links in $\R^3$, is a graded homology theory that categorifies the Jones polynomial in the sense that the graded Euler characteristic of the homology is the Jones polynomial. Asaeda, Przytycki and Sikora generalized this construction by defining a double graded homology theory that categorifies the Kauffman bracket skein module of links in $I$-bundles over surfaces, except for the surface $\rp^2$, where the construction fails due to strange behaviour of links when projected to the non-orientable surface $\rp^2$. This paper categorifies the missing case of the twisted $I$-bundle over $\rp^2$, $\rp^2 \widetilde{\times} I \approx \rpt \setminus \{\ast\}$, 
by redefining the differential in the Khovanov chain complex in a suitable manner.
\end{abstract}


\section{Introduction}

In 1987 Przytycki and Turaev introduced the study of skein modules \cite{Pr2, Tu}. The most studied skein module, the Kauffman bracket skein module (KBSM), is a generalization of the Kauffman bracket polynomial, which itself is a reformulation of the Jones polynomial. Since then, the KBSM has been calculated for a number of different $3$-manifolds and is a powerful  invariant of framed links in these manifolds \cite{Pr1, HP2, Mr, MD}.

Incidentally, in 1990 Yu. V. Drobotukhina introduced the study of links in the real projective space by providing an invariant of such links, a version of the Jones polynomial for $\rpt$ \cite{Dr}.

A major breakthrough in the study of knots in $\R^3$ appeared in the late 1990s by a series of lectures by M. Khovanov, who managed to categorify the Jones polynomial by constructing a chain complex of graded vector spaces with the property that the homology of this chain complex, the Khovanov homology, is a link invariant. Moreover, the graded Euler characteristic of this complex is the Jones polynomial \cite{Kh}.
Perhaps the most outstanding consequence of this theory is the $s$-invariant of Rasmussen, which gives a bound on a knot's slice genus and is sufficient to prove the Milnor conjecture. More recently, Kronheimer and Mrowka showed that the Khovanov homology detects the unknot \cite{KM}.

Various generalizations of the Khovanov homology have been constructed so far. One example is that the KBSM has been categorified for $I$-bundles over all surfaces except $\rp^2$ \cite{APS}. Another generalization was done  by Manturov, who managed to categorify the Jones polynomial of virtual links \cite{Ma}, which, as a special case, includes the categorification of the Jones polynomial of links in $\rpt$.

This paper provides details and explicitly shows how to categorify the KBSM of links in $\rpt$, which is equivalent to categorifying the KBSM of the twisted $I$-bundle over $\rp^2$, the missing piece of the puzzle in \cite{APS}.

\section{The Kauffman bracket skein module of $\boldsymbol{\rpt}$}
\label{sec:kbsm}
To have a working theory of links in $\rpt$, we must first introduce suitable diagrams of these links, which we call \emph{projective links}.
By identifying $\rpt \setminus \{\ast\} \approx \rp^2 \widetilde{\times} I$, the twisted $I$-bundle over $\rp^2 $, a link can be projected to $\rp^2$, a $2$-disk with antipodal points identified on its boundary. Such  diagrams are accompanied by five Reidemeister moves: the three classical Reidemeister moves $\Ri -  \Riii$  and two additional moves $\Riv$ and $\Rv$ that act across the boundary of the $2$-disk (Fig. \ref{fig:reid}) \cite{Dr}. Two links are ambient isotopic in $\rpt$ if a diagram of one link can be transformed into a diagram of the other by a finite sequence of Raidemeister moves $\Ri$ - $\Rv$.

\begin{figure}[tbh]
\centering
\subfigure[$\Ri$]{
\includegraphics[]{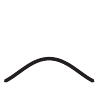} \raisebox{1.6\height}{$ \longleftrightarrow $} \includegraphics[]{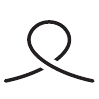}
}
\hspace{6 mm}
\subfigure[$\Rii$]{
\includegraphics[]{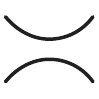} \raisebox{2.6\height}{$ \longleftrightarrow $} \includegraphics[]{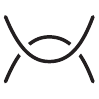}
}
\hspace{6 mm}
\subfigure[$\Riii$]{
\includegraphics[]{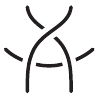} \raisebox{2.6\height}{$ \longleftrightarrow $} \includegraphics[]{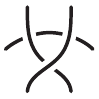}
}

\subfigure[$\Riv$]{
\includegraphics[]{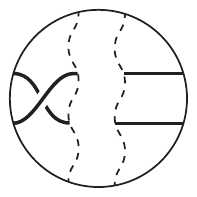} \raisebox{5.6\height}{$ \longleftrightarrow $} \includegraphics[]{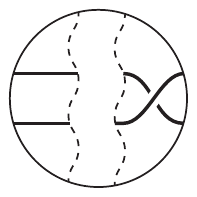}
}
\hspace{10 mm}
\subfigure[$\Rv$]{
\includegraphics[]{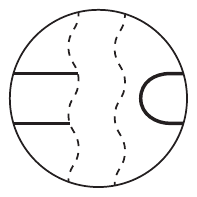} \raisebox{5.6\height}{$ \longleftrightarrow $} \includegraphics[]{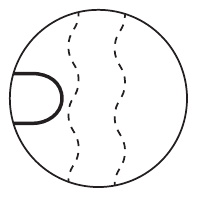}
}
\caption{Three classical and two additional Reidemeister moves}
\label{fig:reid}
\end{figure}

The Kauffman bracket skein module of a 3-manifold $M$ is constructed as follows. Take a coefficient ring $R$ with a distinguished unit $A \in R$. Let $\mathcal{L}_{fr}(M)$ be the set of isotopy classes of framed links in $M$, including the class of the empty link $[\emptyset]$,  and let $R\mathcal{L}_{fr}(M)$ be the free $R$-module spanned by $\mathcal{L}(M)$.

As in the case of the Jones polynomial, we would like to impose the skein relation and the framing relation in $R\mathcal{L}_{fr}(M)$, we therefore take the submodule $\mathcal{S}_{fr}(M)$ of $R\mathcal{L}_{fr}(M)$ generated by
\begin{align*}
\tag{skein relation} \raisebox{-4.5pt}{\includegraphics{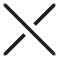}} - A \raisebox{-4.5pt}{\includegraphics{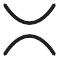}} -A^{-1} \raisebox{-4.5pt}{\includegraphics{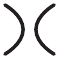}},\\
\tag{framing relation} L \sqcup \raisebox{-4.5pt}{\includegraphics{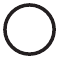}} - (-A^2 - A^{-2}) L.
\end{align*}

The Kauffman bracket skein module $\kbsm(M)$ is $R\mathcal{L}_{fr}(M)$ modulo these two relations: $$\kbsm(M) = R\mathcal{L}_{fr}(M) / \mathcal{S}_{fr}(M).$$

It is shown in \cite{HP1} that the Kauffman bracket skein module of the lens space $L(p,q)$ is free with $\lfloor \frac{p}{2} \rfloor + 1$ generators, in particular, $\kbsm(\rpt)$ has two generators, the isotopy class of the empty set $[\emptyset]$ and the class of the orientation reversing curve x of $\rp^2 \subset \rpt$, shown in Fig. \ref{fig:rptgen}.

\begin{figure}[htb]
\centering
\begin{overpic}{rp3-generator}\put(53,53){$x$}\end{overpic}
\caption{A generator of the KBSM of $\rpt$}
\label{fig:rptgen}
\end{figure}

By expressing a link $L \in \rpt$ in terms of the two generators and setting $[\emptyset] = (-A^2 - A^{-2})^{-1}$ and $[x] = 1$, we get the Kauffman bracket polynomial $\langle L \rangle$ of $L$. By further normalizing the Kauffman bracket by $(-A^3)^{-w(L)}$ we get the Laurent polynomial $X(L) = (-A^3)^{-w(L)} \langle L \rangle$, where $w(L)$ stands for the \emph{writhe} of $L$. By substituting $A=t^{-\frac{1}{4}}$ in $X(L)$ we get the Jones polynomial in $\rpt$ described in \cite{Dr}. Due to the normalization, the Jones polynomial is an invariant of unframed links in $\rpt$. 

We continue by providing a state-sum formula for calculating the KBSM of a projective link.
Let $D$ be an oriented projective link diagram with $n$ crossings.
First, order the crossings arbitrarily from $1$ to $n$ and denote the set of crossings by  $\X$.
Assign each crossing a $+$ or $-$ \emph{sign} using the right-hand rule in Fig. \ref{fig:csign}. The number of positive crossings is marked by $n_+$ and the number of negative crossings is marked by $n_-$.

\begin{figure}[tbh]
\centering
\subfigure[$\sign = +1$]{
\hspace{5 mm} \includegraphics[]{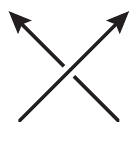} \hspace{5 mm}
}
\subfigure[$\sign = -1$]{
\hspace{5 mm} \includegraphics[]{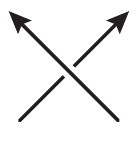} \hspace{5 mm}
}
\caption{The sign of a crossing}
\label{fig:csign}
\end{figure}

Each crossing can be smoothened by a smoothening of type $0$ or $1$ (Fig. \ref{fig:smoothings}). We call $\{ 0, 1 \}^\X$ the \emph{discrete cube} of $D$ and a vertex $s \in \{ 0, 1\}^\X$ a \emph{(Kauffman) state} of $D$. Each state corresponds to a diagram with all crossings smoothened either by a type $0$ or a type $1$ smoothening. For convenience, this complete smoothening is also called a \emph{state} of $D$.
Each state is just a collection of disjoint closed loops which are called \textit{circles}. A circle is \emph{trivial} if it bounds a disk in $\rp^2$ and \emph{projective} if it does not.
By examining  the parity of the number of components crossing the boundary of the projecting disk in $\rp^2$, it can be easily deduced that there is at most one projective circle in each state.
\begin{figure}[tbh]
\centering
\subfigure[type $0$ smoothening]{
\includegraphics[]{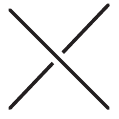} \raisebox{3.4\height}{$ \lra $} \includegraphics[]{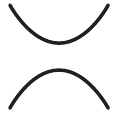}
}
\hspace{10 mm}
\subfigure[type $1$ smoothening]{
\includegraphics[]{skein-x} \raisebox{3.4\height}{$ \lra $} \includegraphics[]{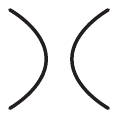}
}
\caption{Two types of smoothenings}
\label{fig:smoothings}
\end{figure}

For a state $s$, we denote by $|s|_T$ the number of trivial circles and by $|s|_P$ the number of projective circles, the number of all circles is denoted by $|s| = |s|_T + |s|_P$.
By $\#0(s)$ we denote the number of $0$ factors in $s$ and by $\#1(s)$ the number of $1$ factors.

The state-sum formula for calculating $[ L ] \in \kbsm(\rpt)$ in terms of the standard generators is
$$\kbsm(\rpt)([L]) = \sum_{s \in \{ 0, 1 \}^\X} A^{\#0(s) - \#1(s)} (-A^2 - A^{-2})^{|s|_T}[x]^{|s|_P}[\emptyset]^{1-|s|_P},$$
where we often use the normalization $[\emptyset] = 1$.

\section{The chain complex}
\label{sec:cx}

For a link $L \subset \R^3$ Khovanov managed to construct a chain complex of graded vector spaces that categorifies the Jones polynomial in the sense that the graded Euler characteristic of that complex is exactly the Jones polynomial \cite{Kh}.
The homology of the chain complex turns out to be a link invariant stronger than the Jones polynomial itself.
Asaeda, Przytycki and Sikora managed to construct a chain complex of bigraded vector spaces that categorify the KBSM of $I$-bundles over surfaces \cite{APS}.
This construction did not work for the twisted $I$-bundle over $\rp^2$, the problem is in essence the strange behaviour of links projected to $\rp^2$, more precisely, the $1 \rightarrow 1$ bifurcations which will be described latter.
Manturov managed to overcome the problem of $1 \rightarrow 1$ bifurcations when he categorified the Jones polynomial of virtual links \cite{Ma}.

In this section we define the Khovanov chain complex with gradings similar to those defined by Asaeda, Przytycki and Sikora. But in the next section, we define the differential using  techniques of Manturov to control the odd behaviour of circles mentioned above.

Let $W = \bigoplus_\jk W_\jk$ be a $\Z \oplus \Z$-graded $\Z$-module. The \emph{Poincar\' e polynomial}, in variables $A$ and $z$, of $W$ is defined as
$$\poincare(W)=\sum_\jk A^j z^k \rk W_\jk.$$
In the standard construction of the Khovanov homology the tensor product is used to form chain complexes, we, however, have to use the wedge product (which is in some sense an ordered version of the tensor product).

The \emph{wedge product} $W \wedge W'$ of the modules $W = \bigoplus_\jk W_\jk$ and $W' = \bigoplus_\jk W'_\jk$ is defined as
$$W \wedge W' = \bigoplus_{\substack{j=j_1+j_2\\k=k_1+k_2}}W_{j_1,k_1} \wedge W'_{j_2,k_2}.$$
The wedge product is associative and anticommutative.

For a permutation $\sigma \in S_n$, the wedge product of  $n$ modules $W_1, W_2, \dotsc, W_n$ is subject to the \emph{permutation rule}:
$$W_{\sigma(1)} \wedge W_{\sigma(2)} \wedge \cdots \wedge W_{\sigma(n)} = \sign(\sigma) \, W_1 \wedge W_2 \wedge \cdots \wedge W_n.$$
The   \emph{degree shift} operator $\smallbullet \shift{l,m}$ shifts the gradings of $W = \bigoplus_\jk W_\jk$  by $(l,m)$:
$$W\shift{l,m} = \bigoplus_\jk W_{j-l,k-m}.$$ 
The degree shift $\smallbullet \shift{l,0}$ is abbreviated to $\smallbullet \shift{l}$, such a shift corresponds to multiplication by $A^l$ in the Poincar\' e polynomial: $\poincare(W \shift{l}) = A^l \poincare(W).$

Let $V = \langle 1,X \rangle$ be the bigraded $\Z$-module freely generated by elements $1$ and $X$ with bigradings $\deg 1  = (-2,0)$ and $\deg X =(2,0)$,
and let $\bV=\langle \bi, \bX \rangle$ the module generated by $\bi$ and $\bX$ with $\deg \bi  = (0,1)$ and $\deg \bX =(0,-1)$.
Note that $\poincare (V) = A^2 + A^{-2} $ and $\poincare (\bV) = z + z^{-1}$, which is exactly what we assign a circle (trivial or resp. projective) in the state-sum formula of the KBSM.

Using the notation in section \ref{sec:kbsm} and assuming the circles are enumerated in a way that the possible projective circle is at the end,
$C_s$ will represent the module associated with the state $s$. A circle in $s$, enumerated with $i$, contributes a factor $V_i$ if the circle is trivial or a factor $\bV_i$ if the circle is projective.
The module $C_s$ is therefore either equal to
$$C_s = (V_1 \wedge V_2 \wedge \cdots \wedge V_n) \shift{\#0(s) - \#1(s)}$$
or is equal to
$$C_s = (V_1 \wedge V_2 \wedge \cdots \wedge V_{n-1} \wedge \bV_n)\shift{\#0(s) - \#1(s)},$$
depending whether $s$ has a projective circle or not. 

The module of $i$-chains is now defined to be the sum of all $C_s$'s where the difference between the number of $0$ and $1$ factors in $s$ is $i$:
$$C^i = \bigoplus_{\substack{s \in \{ 0, 1\}^\X\\ \#0(s) - \#1(s) = i}} C_s.$$
Forming such direct sums of certain modules is sometimes called a \emph{flattening} of $\{C_s\}_s.$
We claim that
$$0 \lra C^{n} \lra C^{n-2} \lra \cdots \lra C^{-n+2} \lra C^{-n} \lra 0$$
with suitable differentials yet to be defined, forms a Khovanov chain complex.

\section{The differential}
\label{sec:diff}

Let $\alpha \in \{ 0, 1, \star \}^\X$ be a sequence with the property that $\star$ appears only once in $\alpha$.
By replacing $\star$ by $0$ we get a vertex of the discrete cube $\lbrace 0, 1\rbrace^\X$ denoted by $\alpha_{\star \rightarrow 0}$
and by replacing $\star$ by $1$ we get an adjacent vertex $\alpha_{\star \rightarrow 1}$.
We call such an $\alpha$ an \emph{edge} of $\{ 0, 1\}^\X$.
Recall that each vertex corresponds to a state of a diagram, that being so, each edge corresponds to a local "change" of two adjacent states.
Each $\alpha$ is associated with a linear map $d_\alpha: C_{\alpha_{\star \rightarrow 0}} \lra C_{\alpha_{\star \rightarrow 1}}$ called a \emph{partial differential}, which will be defined in the next few paragraphs.

For a diagram $D$, we call the collection of modules $\lbrace C_s \rbrace_{s \in \lbrace 0, 1\rbrace^\X}$, together with their partial differentials, the \emph{cube} $\left\llbracket D \right\rrbracket$ of $D$. Now, $\left\llbracket D \right\rrbracket$ needs to form a anticommutative diagram, since this forces the flattened $\left\llbracket D \right\rrbracket$ to form a well-defined chain complex $(C_\bullet, d_\bullet)$.
As seen in \cite{APS}, there is no obvious way to define the partial differential for links in $\rpt$,
but as shown in \cite{Ma} this is possible to achieve for the Jones polynomial, if the links and the circles in the states are oriented and certain signs are applied to the partial differentials.

Hence, we orient both, the diagram $D$ and the circles of the states in an arbitrary manner.
In the neighbourhood of a crossing there are either one or two circles. If we change the smoothening in a crossing from type $0$ to type $1$, we call the change of the involved circles a \emph{bifurcation}.
It is evident that the circles involved in the states $\alpha_{\star \rightarrow 0}$ and $\alpha_{\star \rightarrow 1}$ are in 1-1 correspondence, with one of these exceptions (Fig. \ref{fig:bif}):
\begin{enumerate}[(a)]
\item two circles in  $\alpha_{\star \rightarrow 0}$ join into one circle in  $\alpha_{\star \rightarrow 1}$ (type $2 \rightarrow 1$ bifurcation),
\item a circle in  $\alpha_{\star \rightarrow 0}$ splits into two circles in  $\alpha_{\star \rightarrow 1}$ (type $1\rightarrow 2$ bifurcation),
\item a circle in $\alpha_{\star \rightarrow 0}$ twists into a circle in  $\alpha_{\star \rightarrow 1}$ (type $1\rightarrow 1$ bifurcation).
\end{enumerate}

\begin{figure}[tbh]
\centering
\subfigure[type $2 \rightarrow 1$]{
\includegraphics[]{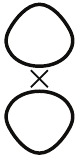} \raisebox{4.3\height}{$ \lra $} \includegraphics[]{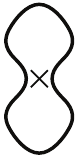}
}
\hspace{3 mm}
\subfigure[type $1\rightarrow 2$]{
\includegraphics[]{biff-1} \raisebox{4.3\height}{$ \lra$} \includegraphics[]{biff-2}
}
\hspace{3 mm}
\subfigure[type $1\rightarrow 1$]{
\raisebox{0.15 \height} { \includegraphics[]{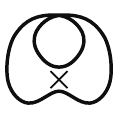} \raisebox{2.8\height}{$ \lra$} \includegraphics[]{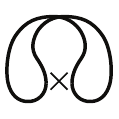}}
}
\caption{Three types of bifurcations}
\label{fig:bif}
\end{figure}

Note that the type $1\rightarrow 1$ bifurcation can only appear in the case of projecting to a non-orientable surface (such as $\rp^2$).

Looking at a crossing $c$ of $D$ in such a way that the two outgoing arcs are facing northwest and northeast and that the ingoing arcs are facing southwest and southeast, we call a circle of some state \emph{locally consistently oriented} at $c$ if its orientation agrees with the orientation of the northeast arc or disagrees with the orientation of the southwest arc (with respect to the arc before the smoothening); vice versa, a circle is \emph{locally inconsistently oriented} at $c$ if its orientation disagrees with the northeast arc or agrees with the southwest arc (see Fig. \ref{fig:proper}). It can happen that both arcs in the state belong to the same circle and the consistency cannot be uniquely defined, in this case the local consistency is \emph{indetermined} at $c$.

\begin{figure}[tbh]
\centering
\begin{tabular}{ccc} \includegraphics[]{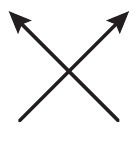}  & \includegraphics[]{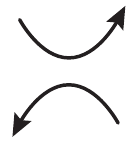} & \includegraphics[]{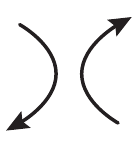} \end{tabular}
\caption{Locally consistent orientations}
\label{fig:proper}
\end{figure}

In order to define the differential, we rearrange the $\wedge$-factors of $C_{\alpha_{\star \rightarrow 0}}$ by a permutation $\sigma \in S_{|\alpha_{\star \rightarrow 0}|}$ in such a way that the factors involved in the bifurcation are at the beginning of the wedge product.
Furthermore, if the bifurcation is of type $2 \rightarrow 1$, assuming that the arcs of the diagram are facing northwest and northeast, we wish that the first factor in the domain is represented by a "left" circle when the bifurcation site is at a positive crossing and is represented by a "top" circle when the bifurcation site is at a negative crossing (see Fig. \ref{fig:difforient}).

\begin{figure}[tbh]
\centering
\subfigure[positive crossing]{ \begin{tabular}{c}
\includegraphics{positive}  \\ 
\begin{overpic}{diff-hs} \put(18,18.5){$1$} \put(72,18.5){$2$}\end{overpic} \raisebox{0.4 cm}{$\overset{m}{\lra}$}  \includegraphics{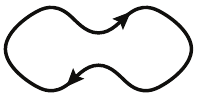} \\
\includegraphics{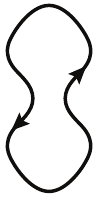} \raisebox{0.9 cm}{$\overset{\Delta}{\lra}$}  \begin{overpic}{diff-vs} \put(20,18){$2$} \put(20,70){$1$}\end{overpic} \\
\end{tabular}}
\subfigure[negative crossing]{ \begin{tabular}{c}
\includegraphics{negative}  \\ 
\begin{overpic}{diff-vs} \put(20,18){$2$} \put(20,70){$1$}\end{overpic} \raisebox{0.9 cm}{$\overset{m}{\lra}$}  \includegraphics{diff-vj} \\
\includegraphics{diff-hj} \raisebox{0.4 cm}{$\overset{\Delta}{\lra}$}  \begin{overpic}{diff-hs} \put(18,18.5){$1$} \put(72,18.5){$2$}\end{overpic} \\
\end{tabular}}
\caption{Order of (locally consistently oriented) circles of a bifurcation}
\label{fig:difforient}
\end{figure}

If the bifurcation is of type $2 \rightarrow 1$, we \emph{multiply} the first two factors;
if the  bifurcation is of type $1\rightarrow 2$, we \emph{comultiply} the first factor and 
if the bifurcation is of type $1\rightarrow 1$ we apply the $0$ map to the first factor.
On the remaining factors we apply the semi-identity $\I = I \wedge I \wedge \cdots \wedge I \, ( \wedge \, \bI),$
where the map $I:V\lra V$  (resp. $\bI: \bV \lra \bV$) is defined by
$I(1) = 1, I(X) = \pm X$ (resp. $\bI(\bi) = \bi, \bI(\bX) = \pm \bX$), with a plus sign on the generator $X$ if the circles that two $V$'s (resp. $\bV$'s) represent have the same orientations and with a minus sign if the orientations are opposite.

Let $m$ be the \emph{multiplication operator} and $\Delta$ the \emph{comultiplication operator}, they are both linear maps
subject to the rules in Table \ref{table:mc}.

\begin{table}
\caption{Multiplication and comultiplication (shifts omitted)} \label{table:mc}
\centering
\begin{tabular}{| c@{\,}c@{\,}c@{\,}c@{\,}c@{\,}c | c@{\,}  c@{\,}c@{\,}c@{\,}c@{\,}c@{\,}c |}
\toprule &&&&&&&&&&&& \\[-0.5 em]
$m:$ & $V_1$  & $\wedge$ & $V_2$           & $\lra$  & $V$           & $\Delta:$ & $V$            & $\lra$   & $V_1$          & $\wedge$ & $V_2$ & \\ 
            & $1_1$  & $\wedge$ & $1_2$            & $\lms$ & $1$            &                    &  $1$           & $\lms$ & $1_1$            & $\wedge$ & $(\pm X_2)$ & $+ \, (\pm X_1) \wedge 1_2$ \\
            & $1_1$  & $\wedge$ & $(\pm X_2)$ & $\lms$ & $(\pm X)$ &                    &  $(\pm X)$ & $\lms$ & $(\pm  X_1)$ & $\wedge$ & $(\pm X_2)$ & \\
  & $(\pm X_1)$ & $\wedge$ & $1_2$            & $\lms$ & $(\pm X)$ &&&&&&& \\
  & $(\pm X_1)$ & $\wedge$ & $(\pm X_2)$ & $\lms$ & $0$  &&&&&&&\\[0.5 em] \midrule &&&&&&&&&&&& \\[-0.5 em]
  $m:$ & $\bV_1$  & $\wedge$ & $V_2$           & $\lra$  & $\bV$ & $\Delta:$ & $\bV$            & $\lra$   & $\bV_1$          & $\wedge$ & $V_2$ & \\
            & $\bi_1$  & $\wedge$ & $1_2$            & $\lms$ & $\bi$     &                    &  $\bi$           & $\lms$ & $\bi_1$            & $\wedge$ & $(\pm X_2)$ & \\
            & $\bi_1$  & $\wedge$ & $(\pm X_2)$ & $\lms$ & $0$       &                    &  $(\pm \bX)$ & $\lms$ & $(\pm  \bX_1)$ & $\wedge$ & $(\pm X_2)$ & \\
  & $(\pm \bX_1)$ & $\wedge$ & $1_2$            & $\lms$ & $(\pm \bX)$ &&&&&&& \\
  & $(\pm \bX_1)$ & $\wedge$ & $(\pm X_2)$ & $\lms$ & $0$   &&&&&&&\\[0.5 em] \midrule &&&&&&&&&&&& \\[-0.5 em]
  $m:$ & $V_1$  & $\wedge$ & $\bV_2$           & $\lra$  & $\bV$ & $\Delta:$ & $\bV$            & $\lra$   & $V_1$          & $\wedge$ & $\bV_2$ & \\
            & $1_1$  & $\wedge$ & $\bi_2$            & $\lms$ & $\bi$     &                    &  $\bi$           & $\lms$ & $(\pm X_1)$  & $ \wedge$ & $\bi_2$ & \\
            & $1_1$  & $\wedge$ & $(\pm \bX_2)$ & $\lms$ & $(\pm \bX)$ &           &  $(\pm \bX)$ & $\lms$ & $(\pm  X_1)$ & $\wedge$ & $(\pm \bX_2)$ & \\
  & $(\pm X_1)$ & $\wedge$ & $\bi_2$            & $\lms$ & $0$ &&&&&&& \\
  & $(\pm X_1)$ & $\wedge$ & $(\pm \bX_2)$ & $\lms$ & $0$ &&&&&&& \\[0.5 em] \bottomrule
\end{tabular}
\end{table}

As suggested in Fig. \ref{fig:difforient}, the order of the factors in the codomain of $\Delta$ depend on the position of the circles they represent. At a positive crossing the first factor is presented by the "top" circle and at a negative crossing the first factor is represented by a "left" circle.

The sign of each factor $X$ (resp. $\bX$) in Table \ref{table:mc} depends on the local consistency of the circle that $X$ (resp. $\bX$) represents at the crossing where the bifurcation appears. If the circle is locally consistently oriented at the crossing, the sign is positive and if the circle is locally inconsistently oriented, the sign is negative. Note that the consistency is indetermined only at bifurcations of type $1\rightarrow 1$. 

After (co)multiplying and applying the identity, we rearrange the factors in the result by the permutation $\rho \in S_{\alpha_{\star \rightarrow 1}}$, so that they agree with the order of factors in the codomain $C_{\alpha_{\star \rightarrow 1}}$.

The above steps describe the partial differential  $d_\alpha: C_{\alpha_{\star \rightarrow 0}} \lra C_{\alpha_{\star \rightarrow 1}}$.
In detail, if $P_\sigma$ is a map that rearranges factors in $C_{\alpha_{\star \rightarrow 0}}$ by $\sigma$ and $P_\rho$ is the map that rearranges factors in $C_{\alpha_{\star \rightarrow 1}}$ by $\rho$,
\begin{equation}
d_\alpha = \begin{cases} P_\rho \circ (m \wedge \I) \circ P_\sigma, & \mbox{if } |\alpha_{\star \rightarrow 0}| = |\alpha_{\star \rightarrow 1}|+1, \\ P_\rho \circ (\Delta \wedge \I) \circ P_\sigma, & \mbox{if }  |\alpha_{\star \rightarrow 0}|+1 = |\alpha_{\star \rightarrow 1}|, \\ P_\rho \circ (0 \wedge \I) \circ P_\sigma, & \mbox{if } |\alpha_{\star \rightarrow 0}| = |\alpha_{\star \rightarrow 1}|.\end{cases}
\label{eq:partial}
\end{equation}
 
The \emph{total differential} $d^{(i)} : C^i \lra C^{i-2}$ is the sum of partial differentials:
$$d^{(i)} = \bigoplus_{\substack{\alpha \in \{ 0, 1, \star \}^\X\\ \#0(\alpha) - \#1(\alpha) = i-1}} d_\alpha.$$
We shall also call such a sum a \emph{flattening} of partial differentials.

As an illustration of how the partial and total differentials work, a detailed example is presented at the end of the next section.

\section{The homology}
\label{sec:homology}

The \emph{$i$-th Khovanov homology group} is
$$H_i = \frac{\kernel d^{(i)}}{\im d^{(i+2)}}.$$

\begin{thm}[proof in section \ref{sec:proofs}] 
Let $L$ be a link in $\rpt$ and $D$ a diagram of $L$. Then $d \circ d=0$, hence $(C_\bullet, d_\bullet)$ is a chain complex. 
\label{thm:dd}
\end{thm}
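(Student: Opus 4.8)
The plan is to run the standard argument via the cube $\llbracket D\rrbracket$ of smoothenings, adapted to the wedge-product bookkeeping and the orientation signs of Section~\ref{sec:diff}. Since $d^{(i)}$ is the flattening $\bigoplus_\alpha d_\alpha$ of the partial differentials along the edges of $\{0,1\}^\X$, the composite $d^{(i-2)}\circ d^{(i)}\colon C^i\to C^{i-4}$ is the flattening of the sums $d_{\alpha'}\circ d_\alpha$ over all pairs of composable edges, and a pair of composable edges is precisely a two-dimensional face of the cube: a choice of two distinct crossings $c,c'$ together with a smoothening of all the others. Every such face carries exactly two directed paths from its $00$-vertex to its $11$-vertex. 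Thus $d\circ d=0$ is equivalent to the assertion that \emph{every face of $\llbracket D\rrbracket$ anticommutes}, i.e. the two composites $C_{00}\to C_{11}$ along a face sum to zero. That one can arrange this \emph{is} the content of the theorem: in \cite{APS} no choice of partial differentials makes the faces coming from $\rp^2$ anticommute, and it is precisely the orientations of the circles and the signs in Table~\ref{table:mc}, borrowed from \cite{Ma}, that repair this.

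First I would dispose of the \emph{disjoint} faces, where the circles of the intermediate states altered at $c$ and at $c'$ form disjoint families. There the two orders of application act on complementary blocks of $\wedge$-factors, so the underlying linear maps commute on the nose; what remains is to check that the permutations $P_\sigma,P_\rho$ moving the two bifurcation blocks to the front and back, together with the $\pm$ signs carried by the identity maps $I,\bI$ on the untouched factors, combine along the two paths to a net factor of $-1$. This is a bookkeeping computation: moving one block past the other contributes the sign of a block transposition (this is where anticommutativity of $\wedge$ is used), while the $I$-signs on the factors not involved at a given site are insensitive to the bifurcation at that site, so they agree along the two paths and cancel.

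Next come the \emph{interacting} faces, where the two bifurcation blocks overlap. Because a state of a projective diagram contains at most one projective circle (Section~\ref{sec:kbsm}), the local configurations form a short finite list, indexed by the ordered pair of bifurcation types at $c$ and $c'$ and by the trivial/projective type of each circle in the block. The cases in which neither edge is a $1\to1$ bifurcation are the familiar Frobenius-algebra ones: a pair of $2\to1$'s asks for associativity of $m$, a pair of $1\to2$'s for coassociativity of $\Delta$, and a mixed pair for the Frobenius-type identities such as $\Delta\circ m=(m\wedge\I)\circ(\I\wedge\Delta)$, in their various $V/\bV$ incarnations. Each of these I would verify directly from Table~\ref{table:mc}, carrying the signs attached to every $X$ and $\bX$ (local consistency at the relevant crossing) and the signs produced by $P_\sigma,P_\rho$ and by the ``left/top'' ordering prescription of Fig.~\ref{fig:difforient}; the verification must be repeated, but is essentially identical, for the $V\wedge V$, $\bV\wedge V$ and $V\wedge\bV$ rows of the table.

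The genuinely new part --- and the step I expect to be the main obstacle --- is the family of interacting faces in which at least one edge, say the one at $c$, is a $1\to1$ bifurcation and hence carries the zero map. The path that flips $c$ first is then identically zero, so anticommutativity forces the other composite to vanish as well. The key lemma to isolate and prove here is a parity statement along an edge of the cube: if flipping $c$ is a $1\to1$ bifurcation in the intermediate state, then after flipping $c'$ first the crossing $c$ still lies on the projective circle, so flipping $c$ there is again a $1\to1$ bifurcation and again the zero map --- the only escape being that $c'$ is itself a $1\to1$ site, in which case both composites vanish trivially. This is once more a boundary-parity count, now carried out across the bifurcation at $c'$ rather than at a single vertex. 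A few mixed sub-cases remain, where $c'$ is an ordinary $2\to1$ or $1\to2$ bifurcation that involves the projective circle twisted at $c$: there the surviving composite is a (co)multiplication into or out of the projective slot $\bV$, and one reads off from the last two blocks of Table~\ref{table:mc} that postcomposing (resp. precomposing) with the zero map of the $1\to1$ step annihilates it. Once every face is shown to anticommute, flattening yields $d^{(i-2)}\circ d^{(i)}=0$ for all $i$, which is the theorem.
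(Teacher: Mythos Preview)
Your reduction to anticommutativity of the two-dimensional faces matches the paper's, and your handling of the disjoint faces is essentially the observation the paper dismisses in one line. The Frobenius-type verification you sketch for the ordinary interacting faces is reasonable in outline, though the sign bookkeeping with the local-consistency conventions of Table~\ref{table:mc} is more intricate than you indicate and would have to be written out in full.

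The genuine gap is in the $1\to1$ faces. Your ``key lemma'' --- that if flipping $c$ at one vertex of a face is a $1\to1$ bifurcation then flipping $c$ at the $c'$-adjacent vertex is again $1\to1$, unless $c'$ is itself a $1\to1$ site --- is false, as is the underlying premise that a $1\to1$ bifurcation necessarily sits on the projective circle. The link of Fig.~\ref{fig:ea1}, whose cube is displayed in Fig.~\ref{fig:essential-example}, is a direct counterexample: every circle in every state is trivial; the edges $d_{0\star}$ and $d_{\star1}$ are $1\to1$ and hence zero, while the remaining two edges are $d_{\star0}=\Delta$ (a $1\to2$) and $d_{1\star}=m$ (a $2\to1$). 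The surviving composite $d_{1\star}\circ d_{\star0}$ therefore contains no zero map and no $\bV$-factor, so neither your parity lemma nor your fallback to the last two blocks of Table~\ref{table:mc} applies. It vanishes only by explicit sign cancellation: $d_{\star0}(1)=1_2\wedge X_1 - X_2\wedge 1_1$, and $d_{1\star}$ sends the two summands to $X$ and $-X$. This cancellation is precisely what Manturov's orientation signs are engineered to produce, and it does not follow from any coarse parity count.

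The paper avoids this trap by brute enumeration: it classifies the connected $4$-valent $2$-vertex graphs in $\rp^2$ up to cube-preserving symmetry (six graphs, Lemma~\ref{lemma:sym-graphs}), inserts crossings to obtain eleven essential two-crossing diagrams (Lemma~\ref{lemma:sym-crossing-links}), fixes the auxiliary orientations and orderings via Lemmas~\ref{lemma:invariance-circle-order}--\ref{lemma:invariance-circle-orientation}, and then checks each face by direct computation. Your structural approach could perhaps be salvaged, but only by isolating exactly which $m\circ\Delta$ and $\Delta\circ m$ composites arise opposite a pair of $1\to1$ edges and verifying the cancellation there --- at which point you are essentially carrying out the case analysis anyway.
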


\begin{thm}[proof in section \ref{sec:proofs}]
For a diagram $D$ of the link $L \subset \rpt$, $H_\bullet(D)$ is preserved under $\Rii$, $\Riii$, $\Riv$ and $\Rv$. $H_\bullet$ is therefore an invariant of framed links in $\rpt$.
\label{thm:invariance}
\end{thm}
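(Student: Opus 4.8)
The plan is to run the standard homotopy-theoretic argument for invariance of Khovanov-type homology, adapted to the wedge-product setting here. Each of $\Rii$, $\Riii$, $\Riv$, $\Rv$ alters the diagram only inside a small disk, hence alters the cube $\llbracket D\rrbracket$ only inside the subcube spanned by the crossings that are created or destroyed, every other circle riding along as a spectator factor inside the semi-identity $\I$. The tool is \emph{Gaussian elimination}: if, after applying the reordering maps $P_\sigma$ and $P_\rho$, a component of a partial differential between two $\wedge$-summands is an isomorphism, that pair of summands may be deleted at the cost of a correction term, producing a chain-homotopy-equivalent complex. Because the homological and $A$-gradings are indexed by $\#0(s)-\#1(s)$, the four moves in the statement are exactly those generating regular isotopy in $\rpt$; $\Ri$, which changes the framing, is necessarily absent, in keeping with the fact that this is only a framed-link invariant.

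Before treating the moves I would record that $H_\bullet(D)$ is independent of the auxiliary choices, so that the second assertion follows from the first: reordering the crossings merely permutes the $\wedge$-factors and is absorbed by $P_\sigma$, $P_\rho$; reversing the orientation of the whole diagram preserves local (in)consistency at every crossing, since it exchanges the roles of the NE and SW arcs simultaneously; and changing the chosen orientation of a single circle in a single state conjugates each incident partial differential by the sign involution $I$ (resp.\ $\bI$) on that factor, hence is an isomorphism of complexes.

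For $\Rii$ I would distinguish the two cases according to which mixed smoothing of the new bigon contains the small circle; that circle bounds a disk, hence is trivial and carries a factor $V$. In the $2\times2$ subcube one edge becomes, after reordering, a copy of $\Delta$ landing in the $1$-summand, and an adjacent edge a copy of $m$ issuing from that summand, and both restrict to isomorphisms onto direct summands (this uses only the first block of Table~\ref{table:mc}, or one of its $\bV$-variants when the other strand of the bigon lies on the projective circle). Two successive cancellations collapse the subcube to a single vertex, and one checks that this vertex, together with its differentials to the untouched part of the cube, is precisely $\llbracket D'\rrbracket$ for the simplified diagram, the local shifts $\shift{\#0(s)-\#1(s)}$ matching because the Kauffman bracket is unchanged by $\Rii$. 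For $\Riii$ I would reduce to $\Rii$ as in \cite{APS}: resolving a suitable crossing presents each side of the move as a mapping cone whose ingredients are related by planar isotopy and one application of $\Rii$; all intermediate maps are tabulated (co)multiplications composed with permutation maps, so the signs are respected. In $\Rii$ and $\Riii$ a projective circle is never involved in a bifurcation of the affected region, so it contributes only a spectator $\bV$ factor and the bookkeeping of \cite{APS} applies verbatim.

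The genuinely new work, and the main obstacle, is $\Riv$ and $\Rv$, which act across the boundary of the projecting disk; the method is again to isolate the affected subcube and cancel, but three features appear that are absent from \cite{APS}. First, pushing a strand across the boundary can reorganise the circle data of corresponding states --- in particular which circle is the projective one may change --- so one must check that the $V$-versus-$\bV$ bookkeeping matches bidegrees on both sides, which is exactly where the assignments $\deg\bi=(0,1)$ and $\deg\bX=(0,-1)$ are forced. Second, the ``bigon'' of such a move need not close up into a small disk-bounding circle: in some orientations of the move the corresponding edge of the subcube is a $1\rightarrow1$ bifurcation, hence the zero map, so the Gaussian elimination must be rerouted along the surviving isomorphism-edges, and one must verify that enough of these always remain to collapse the subcube. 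Third, at a $1\rightarrow1$ bifurcation the local consistency is indetermined, so the signs attached to the neighbouring partial differentials must be shown to be compatible with those of the target complex. I expect the second and third points to be the delicate ones --- the construction of \cite{APS} failed here precisely because the $1\rightarrow1$ bifurcation resists the naive differential --- and checking that the Manturov-style signs make every boundary configuration, including those carrying the projective circle, cancel correctly is the crux. Assembling $\Rii$ through $\Rv$ then yields that $H_\bullet$ is an invariant of framed links in $\rpt$.
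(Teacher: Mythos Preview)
Your treatment of $\Rii$ and $\Riii$ is essentially the paper's: mod out by an acyclic subcomplex where the small trivial circle carries $\langle 1\rangle$, then by a second subcomplex built from the induced isomorphism, and for $\Riii$ perform this cancellation on the top face of each cube and match what remains. That part is fine.

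Your account of $\Riv$ and $\Rv$, however, rests on a misconception. Neither move creates or destroys a crossing: $\Riv$ slides a single arc through the boundary identification of the disk, and $\Rv$ slides an existing crossing through it. Consequently the set $\X$ is unchanged, there is no ``affected subcube'' to isolate, and no Gaussian elimination is needed. The states of $D$ and $D'$ are in canonical bijection, each state consists of the same isotopy classes of circles in $\rp^2$ (so the trivial/projective labelling and the $V$-versus-$\bV$ assignments agree on the nose), and every partial differential is computed from the same local picture at the same crossing. The paper dispatches both moves in one sentence for exactly this reason.

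The phenomena you flag as the ``crux'' --- $1\rightarrow 1$ bifurcations, indetermined local consistency, Manturov-style signs --- are genuine, but they live in the proof of Theorem~\ref{thm:dd} (that $d\circ d=0$), where one must check anticommutativity of every $2$-face of the cube, including those in which a $1\rightarrow 1$ edge occurs. They play no role in the invariance argument itself. So your proposal inverts the difficulty: the moves you treat as routine ($\Rii$, $\Riii$) carry all the content, while the moves you single out as the main obstacle ($\Riv$, $\Rv$) are immediate.
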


The \emph{Euler characteristic} of a $\Z ^3$-graded $\Z$-module $W = \bigoplus_\ijk W_{j,k}^i$ is for our purpose defined as
$$\chi (W) = \sum_{i,j,k} (-1)^{\frac{j-i}{2}}A^j z^k \rk W^i_\jk.$$
It is an easy exercise to check that for a diagram $D$ of $L \subset \rpt$ it follows from the construction that $\chi(C_\bullet(D)) = \chi(H_\bullet(D)) = \kbsm(\rpt)(L)$, where the last equation holds by substituting $[x]$ for $z + z^{-1}$.

\begin{example}
In the link in Fig. \ref{fig:ex} we get 
$C_{00} = V_1 \wedge \bV_2 \shift{2}$,
$C_{01} = \bV$,
$C_{10} = \bV$ and 
$C_{11} = V_1 \wedge \bV_2 \shift{-2}$.
Omitting shifts, partial differentials work in the following manner: 
$d_{0\star}(1_1\wedge\bi_2) = \bi$,
$d_{0\star}(-1_1\wedge\bX_2) = \bX$,
$d_{0\star}(X_1\wedge\bi_2) = 0$,
$d_{0\star}(-X_1\wedge\bX_2) = 0$;
$d_{\star0}(\bi_2 \wedge 1_1) = \bi$,
$d_{\star0}(-\bi_2 \wedge X_1) = 0$,
$d_{\star0}(\bX_2 \wedge 1_1) = -\bX$,
$d_{\star0}(-\bX_2 \wedge X_1) = 0$;
$d_{\star1}(\bi) = X_1 \wedge \bi_2$,
$d_{\star1}(-\bX) = -X_1 \wedge \bX_2$;
$d_{1\star0}(\bi) = -\bi_2  \wedge X_1$,
$d_{1\star0}(\bX) = -\bX_2  \wedge X_1$.
The total differential is:
$d^{(2)}(1_1\wedge\bi_2) = \bi \oplus (-\bi)$,
$d^{(2)}(1_1\wedge\bX_2) = (-\bX) \oplus \bX$,
$d^{(2)}(X_1\wedge\bi_2) = 0 \oplus 0$,
$d^{(2)}(X_1\wedge\bX_2) = 0 \oplus 0$;
$d^{(0)}(\bi \oplus 0) = X_1 \wedge \bi_2$,
$d^{(0)}(\bX \oplus 0) = X_1 \wedge \bX_2$,
$d^{(0)}(0 \oplus \bi) = X_1 \wedge \bi_2$,
$d^{(0)}(0 \oplus \bX) = X_1 \wedge \bX_2$;
$d^{(-2)} = 0$.
It clearly holds that $d\circ d = 0$. The homology groups are
$H_2 \cong \langle X_1 \wedge \bi_2, X_1 \wedge \bX_2 \rangle \shift{2}$, 
$H_0 \cong 0$ and
$H_{-2} \cong \langle 1_1 \wedge \bi_2, 1_1 \wedge \bX_2 \rangle \shift{-2},$
i.e. the only non-trivial dimensions of $H_\bullet$ are $(H_2)_{4,1} \cong (H_2)_{4,-1} \cong (H_2)_{-4,1} \cong (H_2)_{-4,-1} \cong \Z$.
The Euler characteristic is $\chi(H_\bullet) = (-A^4 - A^{-4})(z+z^{-1}) = (-A^4 - A^{-4})[x]$.

\begin{figure}[tbh]
\centering
\subfigure[link $L$]{
\raisebox{1.95cm}{
\includegraphics{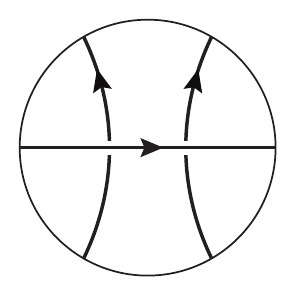}
\label{fig:example}
}
}
\subfigure[cube of $L$]{
\raisebox{0.5cm}{
\begin{tikzpicture}[scale=2.1]
\node[draw,shape=circle, minimum size=2.3cm, draw=white] (AA) at (0,0) {};
\node[draw,shape=circle, minimum size=2.3cm, draw=white] (BB) at (1.25,0.75) {};
\node[draw,shape=circle, minimum size=2.3cm, draw=white] (CC) at (1.25,-0.75) {};
\node[draw,shape=circle, minimum size=2.3cm, draw=white](DD) at (2.5,0) {};

\node (A) at (0,0) {\begin{overpic}[scale=1]{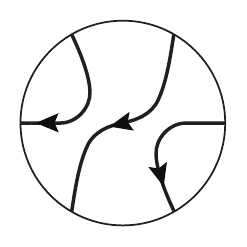}\put(20,65){$1$}\put(37,22){$2$}\put(42.5,-10){$00$}\end{overpic}};
\node (B) at (1.25,0.75) {\begin{overpic}{example-10}\put(42.5,-10){$10$}\end{overpic}};
\node (C) at (1.25,-0.75) {\begin{overpic}{example-01}\put(42.5,-10){$01$}\end{overpic}};
\node (D) at (2.5,0) {\begin{overpic}{example-11}\put(71,65){$1$}\put(54,22){$2$}\put(42.5,-10){$11$}\end{overpic}};

\path[semithick,->]
(AA) edge node[above left]{$d_{\star 0}$} (BB)
(AA) edge node[above right]{$d_{0 \star}$} (CC)
(BB) edge node[above right]{$d_{1 \star}$} (DD)
(CC) edge node[above left]{$d_{\star 1}$} (DD);
\end{tikzpicture}
}
}
\caption{The link $L$ and its cube}
\label{fig:ex}
\end{figure}

\end{example}

\section{Proofs}
\label{sec:proofs}

Lemmas \ref{lemma:invariance-circle-order} - \ref{lemma:invariance-circle-orientation} show that the homology is independent of the free choices we made for choosing orderings of circles and orientations of the link and the circles in the states.

\begin{lemma}
The homology is invariant under the ordering of circles.
\label{lemma:invariance-circle-order}
\begin{proof}
For each state $s$ of $D$, let $o_s$ and $o_s'$ be two different orderings of circles in $s$ and let  $o_s$ and $o_s'$ differ by a permutation $\pi_s$. Permutations $\pi_s$ induce isomorphisms $P_{\pi_s}: C_s \lra C_s$ on the associated modules $C_s$. 
Let, for two adjacent states $q=\alpha_{\star \rightarrow 0}$ and $r=\alpha_{\star \rightarrow 1}$, the partial differential $d_\alpha: C_q \lra C_r$ be defined in terms of the first ordering as $P_{\rho_r} \circ d \circ P_{\sigma_q}$, where in place of $d$'s we have either $m\wedge \I$, $\Delta \wedge \I$ or $0$. 
It follows from definition that  the differential in terms of the second ordering equals $P_{\pi_r \rho_r} \circ d \circ P_{\sigma_q  \pi_q^{-1}}$.
Since $P_{\pi_r \rho_r} \circ d \circ P_{\sigma_q \pi_q^{-1}} \circ P_{\pi_q} = P_{\pi_r} \circ P_{\rho_r} \circ d \circ P_{\sigma_q}$, it follows that the maps $\lbrace P_{\pi_s} \rbrace_s$ form a chain map.
\end{proof}
\end{lemma}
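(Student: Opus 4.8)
The plan is to construct an explicit isomorphism of chain complexes between the complex $(C_\bullet, d_\bullet)$ built from one system of circle orderings and the complex $(C_\bullet, d'_\bullet)$ built from another, and then invoke the standard fact that a chain isomorphism induces an isomorphism on homology. Any two systems of orderings are related by choosing, for each state $s$, a permutation $\pi_s$ taking the first ordering of the circles of $s$ to the second, so it suffices to treat one such pair.

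First I would record the elementary properties of the reordering maps. Each $\pi_s$ induces $P_{\pi_s}\colon C_s\to C_s$; by the permutation rule this is, on each wedge of generators, multiplication by $\sign(\pi_s)$, so $P_{\pi_s}$ is an isomorphism, it preserves the bigrading and the degree shift $\shift{\#0(s)-\#1(s)}$ (reordering $\wedge$-factors changes no degrees), and these maps compose functorially, $P_\tau\circ P_\mu = P_{\tau\mu}$. Flattening over the states gives an isomorphism $P=\bigoplus_s P_{\pi_s}\colon C^i\to C^i$ for every $i$.

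The core step is to verify that $P$ intertwines the two differentials, $d'\circ P = P\circ d$; since both total differentials are flattenings of partial differentials, it is enough to check this on each edge $\alpha$, with $q=\alpha_{\star\to0}$ and $r=\alpha_{\star\to1}$. Here I would use the shape of \eqref{eq:partial}: a partial differential is $P_{\rho}\circ(\mathrm{op}\wedge\I)\circ P_{\sigma}$, where $\sigma$ moves the bifurcation factors to the front of whatever ordering is in force, $\rho$ rearranges the output to match the chosen ordering on $r$, and $\mathrm{op}\in\{m,\Delta,0\}$, together with the left/top prescription and the consistency signs of Table~\ref{table:mc}, is dictated purely by the geometry of the crossing and the chosen orientations, not by the orderings. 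Passing from the first ordering to the second therefore only replaces $\sigma_q$ by $\sigma_q\pi_q^{-1}$ and $\rho_r$ by $\pi_r\rho_r$, so $d'_\alpha = P_{\pi_r\rho_r}\circ(\mathrm{op}\wedge\I)\circ P_{\sigma_q\pi_q^{-1}}$; functoriality of the $P$'s then gives $d'_\alpha\circ P_{\pi_q} = P_{\pi_r}\circ\bigl(P_{\rho_r}\circ(\mathrm{op}\wedge\I)\circ P_{\sigma_q}\bigr) = P_{\pi_r}\circ d_\alpha$, which is exactly the chain-map identity on this edge. Summing over $\alpha$ yields $d'\circ P = P\circ d$, so $P$ is a chain isomorphism and the homology is unchanged.

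The main obstacle I expect is the bookkeeping in that core step: one must confirm that the block $\mathrm{op}\wedge\I$ is genuinely insensitive to the orderings — that the choice of which circle heads the wedge in the $2\to1$ and $1\to2$ cases, and the signs $\pm X$, $\pm\bX$, depend only on the crossing and the fixed orientations and so are transported correctly once $P_\sigma$ has placed the bifurcation factors first — and that the signs produced by the permutation rule are compatible with $P_\tau\circ P_\mu = P_{\tau\mu}$ across the many factors merely carried along by $\I$. Once that is pinned down, the conclusion is immediate.
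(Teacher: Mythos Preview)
Your proposal is correct and follows essentially the same approach as the paper: both introduce the reordering isomorphisms $P_{\pi_s}$, identify $d'_\alpha = P_{\pi_r\rho_r}\circ(\mathrm{op}\wedge\I)\circ P_{\sigma_q\pi_q^{-1}}$, and use the functoriality $P_\tau\circ P_\mu = P_{\tau\mu}$ to obtain the chain-map identity $d'_\alpha\circ P_{\pi_q} = P_{\pi_r}\circ d_\alpha$. Your version is more explicit about why the inner block $\mathrm{op}\wedge\I$ is ordering-independent and about bigrading preservation, but the argument is the same.
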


\begin{lemma}
The homology is invariant under the change of link orientation.
\begin{proof}
It is enough to prove invariance under changing the orientation of one component.
Let $c$ be a crossing of the changed component.
If $c$ is a self-crossing, changing the orientation of both strands of $c$ has the same effect as transposing the orderings of the circles that split at $c$, which is invariant under lemma \ref{lemma:invariance-circle-order}.
If $c$ is not a self-crossing, changing the orientation of one component leads to inverting the locally consistent orientation on all participating circles and transposing the ordering of the two circles if the bifurcation site is of type $1 \rightarrow 2$; by writing down the partial differentials before and after the orientation change, it is easy to check that they both agree.
\end{proof}
\end{lemma}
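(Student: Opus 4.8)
The plan is to build, for each component $K$ of $L$, a chain isomorphism between the complex of $D$ and the complex obtained from $D$ by reversing the orientation of $K$; since an arbitrary change of orientation is a composition of single-component reversals, this proves the lemma. The first step is to see which data of $(C_\bullet,d_\bullet)$ actually move when $K$ is reversed. The modules $C_s$, the homological index $\#0(s)-\#1(s)$, and the semi-identity maps $\I$ (whose signs only compare the two independently chosen orientations of a single circle in two adjacent states) are untouched, and a partial differential at a type $1\to1$ bifurcation is the zero map and carries no ordering data, so it is untouched too. Hence only those partial differentials $d_\alpha$ whose distinguished crossing $c$ lies on $K$ and whose bifurcation is of type $2\to1$ or $1\to2$ can change, and they change only through the local-consistency signs of Table~\ref{table:mc} at $c$ and the ``left/top'' ordering convention of Fig.~\ref{fig:difforient} at $c$.

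Next I would analyse such a $c$ locally. If $c$ is a self-crossing of $K$, both strands at $c$ reverse; the crossing sign is unchanged and the standard-position picture turns by a half-turn, and tracking the effect on the consistency labels and on the ordering convention one finds that the net effect on $d_\alpha$ is precisely to transpose the order of the two circles participating in the bifurcation. Performing this transposition in the two states adjacent to $c$ is an instance of Lemma~\ref{lemma:invariance-circle-order} (which permits an independent permutation $\pi_s$ in each state), so it is realized by a chain isomorphism. If $c$ is not a self-crossing, only the $K$-strand reverses: the crossing sign flips, and after redrawing $c$ in standard position the local consistency of \emph{all} circles participating at $c$ is inverted. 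Here I would use the parity observation that every nonzero term of $m$ involves an even number of $X$- and $\bX$-type generators whereas every term of $\Delta$ involves an odd number, so that flipping all the consistency signs leaves $m$ unchanged and sends $\Delta$ to $-\Delta$; combining this with the convention change---which induces no reordering at a $2\to1$ site and a transposition of the two output circles at a $1\to2$ site---one checks directly from Table~\ref{table:mc} that $d_\alpha$ is unchanged, up to at most an overall sign on the edges distinguished at $c$. Such a residual sign, if it occurs, is killed by the diagonal isomorphism $\phi_s=\pm\mathrm{id}$ with $\phi_s=-\mathrm{id}$ exactly when $s$ smooths $c$ by type $1$, since this commutes with every partial differential not distinguished at $c$ and negates the ones that are. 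Composing these isomorphisms over all crossings of $L$ meeting $K$ yields a chain isomorphism between the two complexes, and hence their homologies agree.

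The step I expect to be the main obstacle is the combined geometry-and-sign bookkeeping behind the phrases ``the net effect is a transposition'' and ``the consistency of all participating circles is inverted''. One has to determine exactly how the half-turn (self-crossing case) or the passage back to standard position (non-self-crossing case) permutes the four arcs at $c$, hence against which of the northeast/southwest arcs each circle's consistency is now measured, and then confirm the resulting identities for $m$ and $\Delta$ uniformly over all three trivial/projective configurations $V_1\wedge V_2$, $\bV_1\wedge V_2$, $V_1\wedge\bV_2$ and all three bifurcation types, keeping in mind that the transposition occurring at a $1\to2$ site may interchange the roles of $\bV_1\wedge V_2$ and $V_1\wedge\bV_2$ and must still be checked against the table. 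The remaining point, that the self-crossing transpositions satisfy the hypotheses of Lemma~\ref{lemma:invariance-circle-order}, is routine since that lemma allows an arbitrary per-state permutation.
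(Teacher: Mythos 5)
Your proposal follows essentially the same route as the paper: reduce to reversing a single component, split into the self-crossing case (where the net effect is a transposition of the two split circles, absorbed by the circle-ordering invariance lemma) and the non-self-crossing case (where all local consistencies at the crossing invert and one checks directly against the (co)multiplication rules that the partial differentials are unchanged). Your added parity observation for $m$ versus $\Delta$ and the fallback diagonal $\pm\mathrm{id}$ isomorphism are elaborations of the verification the paper compresses into ``it is easy to check that they both agree,'' not a different argument.
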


\begin{lemma}
The homology is invariant under the change of orientations of the circles in the states.
\label{lemma:invariance-circle-orientation}
\begin{proof}

Let $f_s: C_s \lra C_s$ be the semi-identity that sends $X$ to $-X$ and $1$ to $1$ (resp. $\bX$ to $-\bX$ and $\bi$ to $\bi$) on the $V$  (resp. $\bV$) components of $C_s$ that represent circles with changed orientations. We observe that changing the orientation of a circle changes the local consistency in the definition of the partial differential $d$. But this is exactly the opposite of what $f$ does, hence $f \circ d \circ f^{-1} = d.$
\end{proof}
\end{lemma}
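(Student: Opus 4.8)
\section*{Proof proposal}

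The plan is to exhibit an explicit grading-preserving automorphism $f$ of the chain module $C_\bullet$ that intertwines the two differentials: $d$, built from the original orientations of the circles in the states, and $d'$, built from the new ones. Note that the modules $C_s$ and the shifts $\shift{\#0(s)-\#1(s)}$ do not refer to any orientation, and that the left/top ordering convention used to fix $\sigma$ and $\rho$ is positional rather than orientational, so only the differential changes. Once $f \circ d = d' \circ f$ is established, $f$ induces an isomorphism $H_\bullet(C_\bullet,d) \cong H_\bullet(C_\bullet,d')$ and the lemma follows. I would take $f = \bigoplus_s f_s$, where $f_s \colon C_s \lra C_s$ acts factorwise: on a $V$-factor whose circle was reoriented in $s$ it sends $X \mapsto -X$, $1 \mapsto 1$; on such a $\bV$-factor it sends $\bX \mapsto -\bX$, $\bi \mapsto \bi$; and it is the identity on all other factors. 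Each $f_s$ is a signed identity, hence a grading-preserving involutive isomorphism compatible with the shifts and the flattening, so the only real content is the intertwining relation, which it suffices to check edge by edge in the form $d'_\alpha = f_{\alpha_{\star\to 1}} \circ d_\alpha \circ f_{\alpha_{\star\to 0}}^{-1}$.

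Writing $d_\alpha = P_\rho \circ (g \wedge \I) \circ P_\sigma$ with $g \in \{m,\Delta,0\}$, I would first observe that the factorwise maps $f_s$ commute with the reordering maps $P_\sigma$, $P_\rho$ — these only permute wedge-factors and pull out a sign, while $f_s$ depends on \emph{which} circle a factor represents, not on its position — and that $P_\sigma$, $P_\rho$ are themselves insensitive to the chosen orientations. Hence conjugating $d_\alpha$ by $f$ amounts to conjugating $g \wedge \I$, and since $f$ respects the splitting of $C_{\alpha_{\star\to 0}}$ and $C_{\alpha_{\star\to 1}}$ into the factors meeting the bifurcation and the spectator factors, the check decouples into a \emph{bifurcation part} (where $g$ acts) and a \emph{spectator part} (where $\I$ acts). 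For the spectator part: $\I$ attaches a $\pm$ to a spectator circle according to whether that circle carries the \emph{same} or the \emph{opposite} orientation in $\alpha_{\star\to 0}$ and in $\alpha_{\star\to 1}$; reorienting the circle in exactly one of the two states toggles that comparison, and conjugation by $f$ contributes a single $-1$ in precisely that situation (and two cancelling $-1$'s when the circle is reoriented in both), so the new $\I$ is reproduced. For the bifurcation part: each $\pm$ in front of an $X$ (resp.\ $\bX$) in Table~\ref{table:mc} is the local consistency sign of the circle it represents at the bifurcation crossing; reorienting that circle flips its local consistency, hence flips exactly that sign, which is precisely the sign inserted by $f$ — and when $g = 0$ (the type $1\to1$ case, where $d_\alpha$ is the zero map) there is nothing to check. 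Running this through each line of the three sub-tables for $m$ and the three for $\Delta$ yields $f_{\alpha_{\star\to 1}} \circ (g\wedge\I) \circ f_{\alpha_{\star\to 0}}^{-1} = g'\wedge\I'$ for the new data, which completes the intertwining.

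The obstacle is not conceptual but bookkeeping: in the bifurcation part one must track, for each of the (at most two) circles meeting the crossing, independently whether it was reoriented on the domain side, whether it was reoriented on the codomain side, and how its local consistency responds, and then verify line by line that the net sign produced by conjugating the old rule by $f$ agrees with the net sign dictated by the new orientations. It is legitimate, and shortens the case analysis, to reduce first to changing the orientation of a single circle in a single state and to compose the resulting elementary isomorphisms; once one representative line is verified, the rest follow by the evident symmetry of Table~\ref{table:mc}, and the lemma is proved.
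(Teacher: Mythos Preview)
Your proposal is correct and follows essentially the same approach as the paper: the paper defines the identical map $f_s$ (sending $X\mapsto -X$, $\bX\mapsto -\bX$ on the reoriented factors), observes that reorienting a circle flips its local consistency and hence the sign attached to $X$/$\bX$ in the partial differential, and concludes that conjugation by $f$ converts one differential into the other. Your write-up simply unpacks this into the commutation with $P_\sigma,P_\rho$, the spectator/$\I$ part, and the bifurcation part, which is a faithful elaboration of the paper's three-sentence sketch.
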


\begin{proof}[Proof of theorem \ref{thm:dd}.]

To show that $d \circ d=0$ holds for a projective link diagram $D$, it is sufficient to prove that every $2$-dimensional face of $\left\llbracket D \right\rrbracket$ is anticommutative. Every $2$-dimensional face corresponds to a smoothening of $n-2$ crossings, since the remaining two crossings are resolved in four different ways. Since the circles that are non-adjacent to either of the two crossings are mapped by $\I$, they can be omitted in the proofs. It therefore suffices to prove anticommutativity for all possible 2-crossing projective diagrams.
For constructing all such diagrams, we take a similar approach as in \cite{APS} and \cite{Ma}. Here is an outline.

Take all 4-valent graphs in $\rp^2$ with 2 rigid vertices and replace each vertex with either an overcrossing or an  undercrossing. This produces all possible 2-crossing diagrams.
Certain graphs can be omitted due to symmetries that leave circles in a natural 1-1 correspondence preserved under partial differentials.
Take vertices $v$ and $u$ and enumerate their edges by $v_0, v_1, v_2, v_3$ and $u_0, u_1, u_2, u_3$ (Fig. \ref{fig:rigid}).

\begin{figure}[tbh]
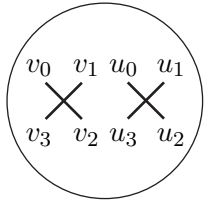

\centering
\begin{overpic}{rigid} \put(15,63){$v_0$}  \put(36,63){$v_1$}  \put(52,63){$u_0$}  \put(73,63){$u_1$} 
\put(15,32){$v_3$}  \put(36,32){$v_2$}  \put(52,32){$u_3$}  \put(73,32){$u_2$} \end{overpic}
\caption{Vertices $v$ and $u$}
\label{fig:rigid}
\end{figure}

We disregard non-connected graphs, since it clearly holds that in this case either $d_{0\star} = -d_{1\star}, d_{\star0} = d_{\star 0} $ or $d_{0\star} = d_{1\star}, d_{\star0} = -d_{\star 0} $.
We may assume that $v_0$ is connected to $v_1$.
Two graphs are symmetric if they are related by the following operations (see \cite{APS} for details):
\begin{enumerate}
\item exchanging $v$ and $u$ produces the same graph;
\item a cyclic permutation $v_i \rightarrow v_{i-k\bmod 4}$ and $u_i \rightarrow u_{i-k  \bmod 4}$ for $i \in \lbrace 0,1,2,3 \rbrace$ if $v_k$ and $u_k$ are connected for some $k \in \lbrace 0,1,2,3 \rbrace$;
\item a flip: either $v_i \leftrightarrow v_{i+2 \bmod 4}$ or $u_i \leftrightarrow u_{i+2 \bmod 4}$ are exchanged for some $i \in \lbrace 0,1,2,3 \rbrace$.
\end{enumerate}

\begin{lemma} There are exactly 6 non-symmetric graphs in $\rp^2$. The graphs are represented in Fig. \ref{fig:essential}.
\label{lemma:sym-graphs}
\end{lemma}
\begin{figure}[tbh]
\centering
\subfigure[]{\includegraphics[]{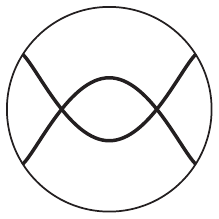}\label{fig:ea}}\;\;\;
\subfigure[]{\includegraphics[]{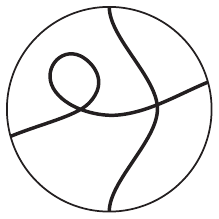}\label{fig:eb}}\;\;\;
\subfigure[]{\includegraphics[]{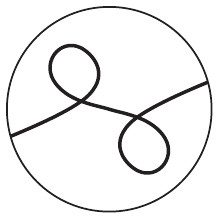}\label{fig:ec}} \\
\subfigure[]{\includegraphics[]{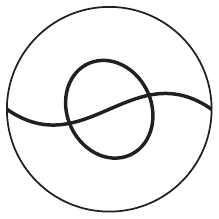}\label{fig:ed}}\;\;\;
\subfigure[]{\includegraphics[]{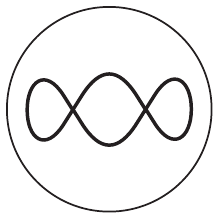}\label{fig:ee}}\;\;\;
\subfigure[]{\includegraphics[]{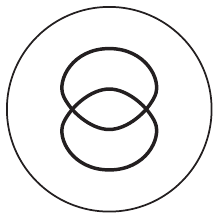}\label{fig:ef}}
\caption{Non-symmetric 4-valent connected graphs with 2 vertices in $\rp^2$}
\label{fig:essential}
\end{figure}

To list all the possible 2-crossing links, we place all 4 possible combinations of crossing on each vertices of the above graphs.
We can omit links where the $1\rightarrow 1$ bifurcation appears in each of the composites of the partial differentials.

\begin{lemma} Up to symmetry and disregarding non-essential cases with two  $1\rightarrow 1$ bifurcations appearing on opposite sides of the composites, there are 11 projective links with two crossings and are represented in Fig. \ref{fig:essentiallinks}.
\label{lemma:sym-crossing-links}
\end{lemma}

\begin{figure}[htb]
\centering
\subfigure[]{\includegraphics[]{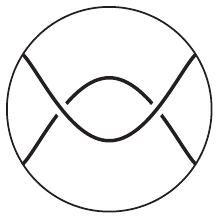}\label{fig:ea1}}
\subfigure[]{\includegraphics[]{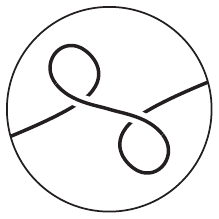}\label{fig:ec1}}
\subfigure[]{\includegraphics[]{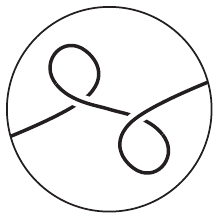}\label{fig:ec2}}
\subfigure[]{\includegraphics[]{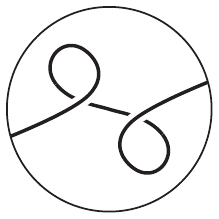}\label{fig:ec3}} \\
\subfigure[]{\includegraphics[]{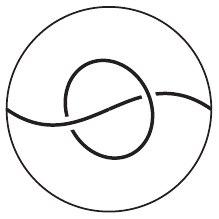}\label{fig:ed1}}
\subfigure[]{\includegraphics[]{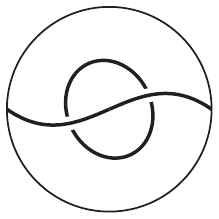}\label{fig:ed2}}
\subfigure[]{\includegraphics[]{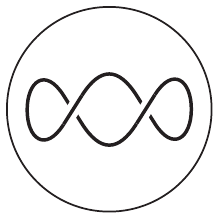}\label{fig:ee1}} 
\subfigure[]{\includegraphics[]{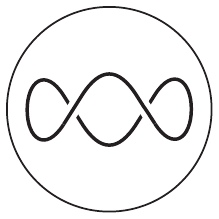}\label{fig:ee2}}\\
\subfigure[]{\includegraphics[]{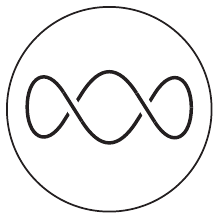}\label{fig:ee3}}
\subfigure[]{\includegraphics[]{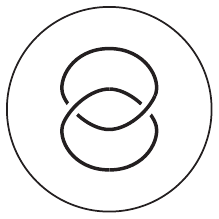}\label{fig:ef1}}
\subfigure[]{\includegraphics[]{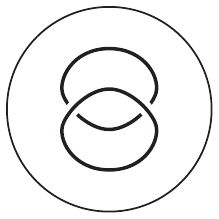}\label{fig:ef2}}
\caption{Non-symmetric essential links with 2 crossings in $\rp^2$}
\label{fig:essentiallinks}
\end{figure}

The proofs of lemmas \ref{lemma:sym-graphs} and \ref{lemma:sym-crossing-links} rely on case-by-case checking and are left to the reader.

\begin{figure}[htb]
\centering
\subfigure[link $L$]{
\raisebox{1.95cm}{
\includegraphics{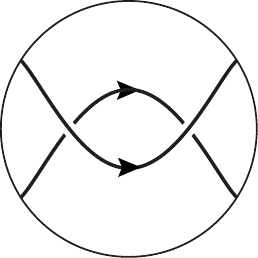}
}
}
\subfigure[cube of $L$]{
\raisebox{0.5cm}{
\begin{tikzpicture}[scale=2.1]
\node[draw,shape=circle, minimum size=2.3cm, draw=white] (AA) at (0,0) {};
\node[draw,shape=circle, minimum size=2.3cm, draw=white] (BB) at (1.25,0.75) {};
\node[draw,shape=circle, minimum size=2.3cm, draw=white] (CC) at (1.25,-0.75) {};
\node[draw,shape=circle, minimum size=2.3cm, draw=white](DD) at (2.5,0) {};

\node (A) at (0,0) {\begin{overpic}[scale=1]{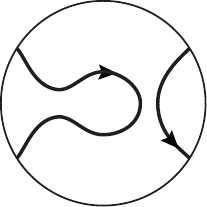}\put(42.5,-17){$00$}\end{overpic}};
\node (B) at (1.25,0.75) {\begin{overpic}{essential-10}\put(42.5,-17){$10$} \put(13,44.5){$1$}\put(45.75,44.5){$2$} \end{overpic}};
\node (C) at (1.25,-0.75) {\begin{overpic}{essential-01}\put(42.5,-17){$01$}\end{overpic}};
\node (D) at (2.5,0) {\begin{overpic}{essential-11}\put(42.5,-17){$11$}\end{overpic}};

\path[semithick,->]
(AA) edge node[above left]{$d_{\star 0}$} (BB)
(AA) edge node[above right]{$d_{0 \star}$} (CC)
(BB) edge node[above right]{$d_{1 \star}$} (DD)
(CC) edge node[above left]{$d_{\star 1}$} (DD);
\end{tikzpicture}
}
}
\caption{The link $L$ of Fig. \ref{fig:ea1} and its cube}
\label{fig:essential-example}
\end{figure}

Due to lemmas \ref{lemma:invariance-circle-order} - \ref{lemma:invariance-circle-orientation} we can orient the above 11 links and their states arbitrarily, order the circles arbitrarily and calculate the differentials to show that $d\circ d=0$.
We check anticommutativity only for the link in Fig. \ref{fig:ea1}, the rest are done likewise.

From Fig.\;\ref{fig:essential-example} we get $d_{\star 0}(1) = 1_2 \wedge X_1 - X_2 \wedge 1_1$, $d_{\star 0}(-X) = - X_2 \wedge X_1$;
$d_{1 \star}(1_1 \wedge 1_2)=1$, $d_{1 \star}(-1_1 \wedge X_2)=X$, $d_{1 \star}(-X_1 \wedge 1_2)=X$, $d_{1 \star}(X_1 \wedge X_2)=0$; 
$d_{0 \star} = d_{\star 1} = 0$,
hence $d_{\star 0}d_{1 \star} = d_{0 \star}d_{\star 1} = 0$.
\end{proof}

\begin{proof}[Proof of theorem \ref{thm:invariance}]
The proofs of invariance under moves $\Rii$ and $\Riii$ almost entirely coincide with those of the classical case \cite{BN}.
The main tool used for showing invariance is the following cancelation principle:
\begin{lemma}
Let $C$ be a chain complex and let $C' \subset C$ be s sub-chain complex.  Then, if $C'$ is acyclic it holds that  $H(C) = H(C/C')$, on the other hand, if  $C/C'$ is acyclic it holds that $H(C) = H(C')$.
\begin{proof}
Both equalities follow trivially from the long exact homology sequence
$$\cdots \lra H^n(C') \lra H^n(C) \lra H^n(C/C') \lra H^{n+1}(C) \lra \cdots$$
associated with the short exact sequence $0 \lra C' \lra C \lra C/C' \lra 0.$
\end{proof}
\label{lemma:sub}
\end{lemma}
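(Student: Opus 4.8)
The plan is to obtain both isomorphisms at once from the long exact sequence in homology attached to a short exact sequence of chain complexes. First I would record that the inclusion $C' \hookrightarrow C$ and the quotient map $C \twoheadrightarrow C/C'$ are chain maps (they commute with $d_\bullet$, by the very definition of a subcomplex and of the quotient complex), and that together they assemble into a short exact sequence
$$0 \lra C' \lra C \lra C/C' \lra 0,$$
exactness being checked one homological degree at a time: $(C')^i \to C^i$ is injective, $C^i \to (C/C')^i$ is surjective, and the image of the first equals the kernel of the second by construction of $C/C'$.

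Next I would invoke the standard consequence of the snake lemma, that a short exact sequence of complexes induces a long exact sequence in homology,
$$\cdots \lra H_i(C') \lra H_i(C) \lra H_i(C/C') \overset{\partial}{\lra} H_{i-2}(C') \lra H_{i-2}(C) \lra \cdots,$$
in which the unlabelled arrows are the maps induced by the inclusion and the quotient map and $\partial$ is the connecting homomorphism (it drops the index by $2$ because $d^{(i)}\colon C^i \to C^{i-2}$ does in this paper's convention). Now the hypotheses go in directly. If $C'$ is acyclic then $H_i(C') = 0$ for all $i$, so each $H_i(C)$ sits between two zeros in the exact sequence and the induced map $H_i(C) \to H_i(C/C')$ is an isomorphism; hence $H(C) \cong H(C/C')$. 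Symmetrically, if $C/C'$ is acyclic then $H_i(C/C') = 0$ for all $i$, so $H_i(C') \to H_i(C)$ is an isomorphism for every $i$ and $H(C) \cong H(C')$.

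I do not expect any genuine obstacle: the statement is elementary homological algebra and the argument is a two-line appeal to the long exact sequence. The only things worth a moment's care are the grading convention of this paper (steps of $2$, so the connecting map lowers degree by $2$) and --- since Lemma~\ref{lemma:sub} will be used repeatedly inside the proof of Theorem~\ref{thm:invariance} --- the remark that the resulting isomorphisms are natural in $C$, which follows from naturality of the long exact sequence and is what lets one chain these identifications together across Reidemeister moves. I would therefore keep the proof to the short argument above rather than reproving exactness of the homology sequence from scratch.
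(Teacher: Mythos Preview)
Your proposal is correct and follows exactly the same route as the paper: both derive the isomorphisms directly from the long exact homology sequence associated with the short exact sequence $0 \lra C' \lra C \lra C/C' \lra 0$. Your write-up is simply a more detailed version of the paper's two-line proof, with the added (and harmless) care about the degree-$2$ grading convention and naturality.
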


\begin{proof}[Invariance under $\Rii$]
\renewcommand{\qedsymbol}{}

The cube of $\bracsmaller{proof2}$ can be expressed in terms of subcubes as indicated on the left diagram below. 
\begin{center}
\begin{tikzpicture}[scale=1.8]
\node[draw,shape=circle, minimum size=1.4cm, draw=white] (AA) at (0,0) {}; \node[draw,shape=circle, minimum size=1.5cm, draw=white] (AB) at (1,0.75) {};
\node[draw,shape=circle, minimum size=1.5cm, draw=white] (BA) at (1,-0.75) {}; \node[draw,shape=circle, minimum size=1.7cm, draw=white](BB) at (2,0) {};
\node (A) at (0,0) {$\bracmid{proof2b}\{-2\}$}; \node (B) at (1,0.75) {$\bracmid{proof2a}$};
\node (C) at (1,-0.75) {$\bracmid{proof2d}$}; \node (D) at (2 ,0) {$\bracmid{proof2c}\{2\}$};
\path[semithick,->]
(AA) edge node[above left]{$\Delta$} (AB) (AA) edge node[below left]{} (BA) (AB) edge node[above right]{$m$} (BB) (BA) edge node[above left]{} (BB);
\node at (1,0) {$C$}; \node at (3.2,0) {$\supset$};
\node[draw,shape=circle, minimum size=1.0cm, draw=white] (AA) at (4,0) {}; \node[draw,shape=circle, minimum size=1.5cm, draw=white] (AB) at (5,0.75) {};
\node[draw,shape=circle, minimum size=1.0cm, draw=white] (BA) at (5,-0.75) {}; \node[draw,shape=circle, minimum size=1.7cm, draw=white](BB) at (6,0) {};
\node at (4,0) {$0$}; \node at (5,0.75) {$\bracmid{proof2a}_1$};
\node at (5,-0.75) {$0$}; \node at (6 ,0) {$\bracmid{proof2c}\{2\}$};
\path[semithick,->]
(AA) edge node[above left]{} (AB) (AA) edge node[above right]{} (BA) (AB) edge node[above right]{$m$} (BB) (BA) edge node[above left]{} (BB);
\node at (5,0) {$C'$};
\end{tikzpicture}
\end{center}
Let $C$ be the flattening of this cube. The chain complex $C$ contains a subcomplex $C'$ in which the subscript $1$ of $\bracsmaller{proof2a}_1$ denotes the cube of submodules where the trivial "middle" circle is not assigned the module $V$, but instead the free module $\langle 1 \rangle$. Since $m$ is an isomorphism in $C'$, $C'$ is acyclic and $H(C) \cong H(C/C')$ by lemma \ref{lemma:sub}.

\begin{center} 
\begin{tikzpicture}[scale=1.8]
\node[draw,shape=circle, minimum size=1.4cm, draw=white] (AA) at (0,-2.5) {}; \node[draw,shape=circle, minimum size=1.5cm, draw=white] (AB) at (1,-1.75) {};
\node[draw,shape=circle, minimum size=1.5cm, draw=white] (BA) at (1,-3.25) {}; \node[draw,shape=circle, minimum size=1.0cm, draw=white](BB) at (2,-2.5) {};
\node at (0,-2.5) {$\bracmid{proof2b}\{-2\}$}; \node at (1,-1.75) {$\bracmid{proof2a}_X$};
\node at (1,-3.25) {$\bracmid{proof2d}$}; \node at (2 ,-2.5) {$0$};
\path[semithick,->]
(AA) edge node[above left]{$\Delta$} (AB) (AA) edge node[below left]{$d_{\star 0}$} (BA)
(AB) edge node[above right]{} (BB) (BA) edge node[above left]{} (BB) (AB) edge node[right]{$\tau$} (BA);
\node at (-1.3,-2.5) {$C/C'$};
\end{tikzpicture}
\end{center}

The map $\Delta$ is an isomorphism in $C/C'$, so we can define a map $\tau$ that is the the composition $\tau = d_{\star 0} \Delta^{-1} $. 
Let $C''$ be the subcomplex of $C/C'$ consisting of all elements $\alpha \in \bracsmaller{proof2b}$ and all elements of the form $\beta \oplus \tau \beta \in \bracsmaller{proof2a}_X \oplus \bracsmaller{proof2d}$.
Taking $C/C'$ mod $C''$, we kill $\bracsmaller{proof2b}$ and impose the relation $\beta \oplus 0 = 0 \oplus \tau \beta$ in $\bracsmaller{proof2a}_X \oplus \bracsmaller{proof2d}$, but we have an arbitrary choice of $\gamma \in \bracsmaller{proof2d}$, hence $(C/C')/C''$ is isomorphic to $\bracsmaller{proof2d}$.

\end{proof}

\begin{proof}[Invariance under $\Riii$]
\renewcommand{\qedsymbol}{}

In order to prove invariance under the third Reidemeister move, we expand both cubes $\bracsmall{proof31}$ and $\bracsmall{proof32}$ to the left and right diagrams below.
\begin{center} \begin{tikzpicture}[scale=1.7]
\node(AAA) at (0,0)  {$\brac{proof3c8}$}; \node(AAB) at (1,1) {$\brac{proof3b4}$}; 
\node(ABA) at (1,0) {$\brac{proof3b8}$};  \node(BAA)at (1,-1) {$\brac{proof3d1}$};
\node(ABB) at (2,1) {$\brac{proof3a1}$}; \node(BAB)  at (2,0) {$\brac{proof3c4}$};
\node(BBA)   at (2,-1) {$\brac{proof3c11}$}; \node(BBB) at (3,0) {$\brac{proof3b12}$};
\path[semithick,->]
(AAA) edge node{} (AAB) (AAA) edge node{} (ABA) (AAA) edge node{} (BAA)
(AAB) edge node[above]{$\Delta$} (ABB) (ABA) edge node{} (ABB) (ABA) edge node{} (BBA) (BAA) edge node{} (BBA) (ABB) edge node[above right]{$m$} (BBB)
(BAB) edge node{} (BBB) (BBA) edge node{} (BBB);
\path[-,white,line width=2 mm] (AAB) edge node{} (BAB) (BAA) edge node{} (BAB);
\path[semithick,->] (AAB) edge node{} (BAB) (BAA) edge node{} (BAB);
\node(AAA)  at (4,0) {$\brac{proof3c2}$}; \node(AAB)  at (5,1) {$\brac{proof3b10}$}; 
\node(ABA)  at (5,0) {$\brac{proof3d2}$}; \node(BAA)   at (5,-1) {$\brac{proof3b2}$};
\node(ABB)   at (6,1) {$\brac{proof3c10}$};  \node(BAB)   at (6,0) {$\brac{proof3a2}$}; 
\node(BBA)   at (6,-1) {$\brac{proof3c5}$}; \node(BBB)   at (7,0) {$\brac{proof3b6}$};
\path[semithick,->]
(AAA) edge node{} (AAB) (AAA) edge node{} (ABA) (AAA) edge node{} (BAA) (AAB) edge node{} (ABB)
(ABA) edge node{} (ABB) (ABA) edge node{} (BBA) (BAA) edge node{} (BBA) (ABB) edge node{} (BBB)
(BAB) edge node[above]{$m$} (BBB) (BBA) edge node{} (BBB);
\path[-,white,line width=2 mm] (AAB) edge node{} (BAB) (BAA) edge node{} (BAB);
\path[semithick,->] (AAB) edge node[below=2pt, right=4pt]{$\Delta$} (BAB) (BAA) edge node{} (BAB);
\end{tikzpicture}
\end{center} 

In the above diagrams we omit degree shifts. For simplicity, we also keep the states of both cubes coherently oriented.
 
We repeat the definitions of modding out acyclic complexes $C'$ and $C''$ from the proof of $\Rii$ with the top-right parallelograms, as a result we obtain:
\begin{figure}[!h]
\centering
\begin{tikzpicture}[scale=1.7]
\node(AAA) at (0,0) {$\brac{proof3c8}$}; \node[inner sep=5pt] (AAB) at (1,1) {$0$}; 
\node(ABA) at (1,0) {$\brac{proof3b8}$};  \node(BAA) at (1,-1) {$\brac{proof3d1}$};
\node(ABB) at (2,1) {$\brac{proof3a1}_X$};  \node(BAB) at (2,0) {$\brac{proof3c4}$}; 
\node(BBA) at (2,-1) {$\brac{proof3c11}$}; \node[inner sep=5pt] (BBB) at (3,0) {$0$};
\path[semithick,->]
(AAA) edge node{} (AAB) (AAA) edge node{} (ABA) (AAA) edge node{} (BAA)
(AAB) edge node{} (ABB) (ABA) edge node[left=22pt,above=-13pt]{$d_{1,\star 01}$} (ABB) (ABA) edge node{} (BBA)
(BAA) edge node{} (BBA) (ABB) edge node{} (BBB) (BAB) edge node{} (BBB) (BBA) edge node{} (BBB);
\path[-,white,line width=2 mm] (AAB) edge node{} (BAB) (BAA) edge node{} (BAB);
\path[semithick,->] (AAB) edge node{} (BAB) (BAA) edge node[below=3pt,left=4pt]{$d_{1,\star 10}$} (BAB);
\path[semithick,->] (ABB) edge node[right]{$\tau_1$} (BAB);
\node (AAA) at (4,0) {$\brac{proof3c2}$}; \node[inner sep=5pt] (AAB) at (5,1) {$0$};
\node(ABA) at (5,0) {$\brac{proof3d2}$}; \node(BAA) at (5,-1) {$\brac{proof3b2}$};
\node(ABB) at (6,1) {$\brac{proof3c10}$}; \node(BAB) at (6,0) {$\brac{proof3a2}_X$}; 
\node(BBA) at (6,-1) {$\brac{proof3c5}$};\node[inner sep=5pt] (BBB) at (7,0) {$0$};
\path[semithick,->]
(AAA) edge node{} (AAB) (AAA) edge node{} (ABA) (AAA) edge node{} (BAA)
(AAB) edge node{} (ABB) (ABA) edge node[left=22pt,above=-13pt]{$d_{2,\star 01}$} (ABB) (ABA) edge node{} (BBA) (BAA) edge node{} (BBA)
(ABB) edge node{} (BBB) (BAB) edge node{} (BBB) (BBA) edge node{} (BBB);
\path[-,white,line width=2 mm] (AAB) edge node{} (BAB) (BAA) edge node{} (BAB);
\path[semithick,->] (AAB) edge node{} (BAB) (BAA) edge node[below=3pt,left=4pt]{$d_{2,\star 10}$} (BAB);
\path[semithick,->] (BAB) edge node[right]{$\tau_2$} (ABB);
\end{tikzpicture}
\end{figure}

Let $T$ be the homomorphism that sends the bottom-left parallelogram of the left cube to the bottom-left parallelogram of the right cube, but transposes the top-right parallelogram by 
sending $\bracsmall{proof3a1}_X  \oplus \bracsmall{proof3c4}$ to $\bracsmall{proof3a2}_X \oplus \bracsmall{proof3c10}$ via $T(\beta_1 \oplus \gamma_1) = \beta_2 \oplus \gamma_2$ for $\beta_1 \in \bracsmall{proof3a1}_X, \gamma_1 \in  \bracsmall{proof3c4}, \beta_2 \in \bracsmall{proof3a2}_X$ and $\gamma_2 \in \bracsmall{proof3c10}$.
Carefully writing down the definitions of the partial differentials, it easily follows that $\tau_1 \circ d_{1,\star 01} = d_{2,\star 01}$ and $d_{1,\star 10} = \tau_2 \circ d_{2,\star 10}$, hence, $T$ is an isomorphism.
\end{proof}
\begin{proof}[Invariance under $\Riv$ and $\Rv$]
\renewcommand{\qedsymbol}{}
Invariance under the Reidemeister moves $\Riv$ and $\Rv$ follows trivially, since there is a natural 1-1 correspondence that preserves the differentials between states of the diagram before and after these moves. 
\end{proof}
\end{proof}


\section{Conclusion and open questions}
\label{sec:conclusion}

This paper closes the chapter of categorifying the Kauffman bracket skein module of links in $I$-bundles over surfaces and perhaps opening a new chapter on categorifying Jones-like invariants of other $3$-manifolds, the most obvious candidates being the general lens space $L(p, q)$ and $S^1$-bundles over surfaces. 

Overcoming the difficulties of $1 \rightarrow 1$ bifurcations, the main difficulty from here on lies in the question of developing a non-recursive state-sum formula for describing the link in the particular skein module of the space. In particular, it seems that no easy-to-describe formula exists for the KBSM of $L(p, q)$ with $(p,q) \neq (2,1)$.


\section*{Acknowledgements} 
The author is grateful to M. Mroczkowski, M. Cencelj, J. Male\v si\v c and J. H. Przytycki for useful discussions and sharing their insights on the subject.


\end{document}